\newcommand{\g}{\ensuremath{\mathfrak g}}
\newcommand{\go}{\ensuremath{{\mathfrak g}_0}}
\newcommand{\h}{\ensuremath{\mathfrak h}}
\newcommand{\kk}{\ensuremath{\mathfrak k}}
\newcommand{\kko}{\ensuremath{{\mathfrak k}_0}}
\newcommand{\so}{\ensuremath{\mathfrak so}}
\newcommand{\su}{\ensuremath{\mathfrak su}}
\newcommand{\esl}{\ensuremath{\mathfrak sl}}
\newcommand{\C}{\ensuremath{\mathbb C}}
\newcommand{\R}{\ensuremath{\mathbb R}}
\newcommand{\N}{\ensuremath{\mathbb N}}
\newcommand{\Z}{\ensuremath{\mathbb Z}}
\theoremstyle{definition}
\newtheorem{definition}{Definition}
\newtheorem{theorem}{Theorem}
\newtheorem{remark}{Remark}
\newtheorem{example}{Example}
\newtheorem{lemma}{Lemma}
\newcommand{\ds}{\displaystyle}
\begin{document}

\begin{center}
  {\bf  \Large Unitary $(\g,K)$ modules of $SU(2,1)$}
\end{center}
\vspace{5mm}

\begin{center}
  D. Kova\v{c}evi\'{c}{\footnote{e-mail:domagoj.kovacevic@fer.hr}}\\
  University of Zagreb, Faculty of Electrical Engineering and Computing,\\
  Unska 3, HR-10000 Zagreb, Croatia
\end{center}
\setcounter{page}{1}
\vspace{7mm}

\begin{abstract}
  Let $G=SU(2,1)$. In this paper we parametrize irreducible unitary
  $(\g,K)$ modules of $G$. The parametrization is done in two steps.
  Firstly, we parametrize irreducible $(\g,K)$ modules (Theorem \ref{tmgk}).
  In the second step we find unitary $(\g,K)$ modules (Theorem \ref{tmunit}).
  One can compare our results with \cite{hrk72} and \cite{hrk}.
\end{abstract}

\section{Introduction}

Let $G$ be a real reductive group. We will follow the definition of the real
reductive group from \cite{kn}. The main goal of the representation theory is
finding the unitary dual of the group $G$. One can approach to this problem
using $(\g,K)$ modules (see \cite{bal}) which correspond to admissible
representations. In the first step all irreducible $(\g,K)$ modules are
found. In the second step it remains to find unitary $(\g,K)$ modules.
The first step correspond to Langlands classification. The second problem
is still unsolved in general.

In this paper we find the unitary dual for the group $G=SU(2,1)$. The unitary
dual of $SU(n,1)$ is already found by Kraljevi\'{c} (see \cite{hrk72} and
\cite{hrk}). Why should anyone solve already solved problem? We hope that our
technique, which is applied for $SU(2,1)$, can be generalized and applied to
some other groups. Also, the construction of unitary $(\g,K)$ modules in this
case is very explicit.

It is important to emphasize that for each weight $\lambda\in\h^*$, given by
$\lambda=(n,m)$, $n\in\N$ and $m\in\Z$ (see Definition \ref{restok}), the
space of vectors spanned by $v_{nm}^1$ is one-dimensional. It is also valid
for $SU(r,1)$ for any $r\in\N$, but it is not valid for $SU(r,s)$ where
$r,s\in\N$ and $r,s>1$. We used that fact in our construction and it
simplified our calculations. Also, it is the reason why the unitary dual
of $SU(r,s)$ is not known in general case.

We start with the set of coefficients $\{a_{nm}b_{nm},c_{nm},d_{nm}\}$
which completely describe the action of the complexified Lie algebra \g.
Actually, products $ad$ and $bc$ can be calculated and they give an important
information about the irreducibility of $(\g,K)$ modules. Also, the set of
products $ad$ and $bc$ (explained in Theorem \ref{tmrel}) is the key ingredient
in description of unitary $(\g,K)$ modules and classification of irreducible
unitary $(\g,K)$ modules. The main idea in this approach is to treat $K$ types
as points. The rest of construction at some points looks like a construction of
irreducible unitary $(\g,K)$ modules of $SL(2,\R)$.

In Section \ref{secsl2r} we recall basic results in representation theory of
$SL(2,\R)$. Some statements will be used later and some statements will be
compared with our results. Also, we wanted to demonstrate our ideas in
this case. In Section \ref{secgk} we construct $(\g,K)$ modules using certain
set of coefficients. After that we parametrize irreducible $(\g,K)$ modules
of $G=SU(2,1)$. The parametrization is given in Theorem \ref{tmgk}.
In Section \ref{secunit} we parametrize unitary $(\g,K)$ modules.
The key step is done in Theorem \ref{tmun}.
The parametrization is given in Theorem \ref{tmunit}.

Lie groups will be denoted by capital letters, corresponding Lie algebras
by Gothic letters with subscript 0 and complexified Lie algebras by
Gothic letters without subscript. For example, the Lie algebra of $G$
will be denoted by \go\ and the complexified Lie algebra by \g. If $H$,
$X$ and $Y$ span a basis for \esl(2,\C) such that $[H,X]=2X$, $[H,Y]=-2Y$
and $[X,Y]=H$ then we say that \esl(2,\C) is represented by a
triple $(H,X,Y)$. If $\pi,V$ is the representation of \g, usually, we will
write $X.v$ instead of $\pi(X)v$ for $X\in\g$ and $v\in V$.

\section{Unitary dual of $SL(2,\R)$}\label{secsl2r}

Let $G=SL(2,\R),\ K=SO(2),\ \g=\esl(2,\C),\ \go=\esl(2,\R)$ and $\kk=\so(2)$.
Unitary representations of $G$ are well known, see \cite{bar}. However, we
redo the construction since some details appear later.

We use notation and results from \cite{vo}.
The basis of \g\ contains elements
\begin{equation}\nonumber
  H=-i\left[\begin{array}{cc}0&1\\-1&0\end{array}\right],
\end{equation}
\begin{equation}\nonumber
  X=\frac12\left(\left[\begin{array}{cc}1&0\\0&-1\end{array}\right]
    +i\left[\begin{array}{cc}0&1\\1&0\end{array}\right]\right)
    =\frac12\left(A+iB\right)
\end{equation}
and
\begin{equation}\nonumber
  Y=\frac12\left(\left[\begin{array}{cc}1&0\\0&-1\end{array}\right]
    -i\left[\begin{array}{cc}0&1\\1&0\end{array}\right]\right)
    =\frac12\left(A-iB\right).
\end{equation}
It is easy to check that
\begin{equation}\nonumber
  [iH,B]=2A,\quad [A,iH]=2B\quad\mbox{and}\quad[B,A]=-2iH.
\end{equation}
Let $W$ be a \esl(2,\C) module.
Then we can choose a basis $\{w^k\,|\,k\in S\subset\Z\}$ of $W$ such that
$w^k\in W$, $H.w^k=kw^k$ and
\begin{equation}\nonumber
  X.w^k=\frac12\left(\lambda+(k+1)\right)w^{k+2}=a_kw^{k+2}
\end{equation}
and
\begin{equation}\nonumber
  Y.w^k=\frac12\left(\lambda-(k-1)\right)w^{k-2}=b_kw^{k-2}
\end{equation}
for some $\lambda\in\C$ and some set $S$ (\cite{vo}, Lemma 1.2.6). In \cite{vo},
they analyze irreducible (\g,$K$) modules and do not mention the set $S$.
Here, at this point, we concentrate on coefficients $a_k$ and $b_k$.
The set $S$ can have the form $\{2m\,|\,m\in\Z\}$ or $\{1+2m\,|\,m\in\Z\}$.
If $\lambda\notin\Z$, then the module $W$ is irreducible. If $\lambda\in\Z$,
then the module $W$ has submodules. It is easy to see that
\begin{equation}\nonumber
  A.w^k=\left(X+Y\right).w^k=a_kw^{k+2}+b_kw^{k-2}
\end{equation}
and
\begin{equation}\nonumber
  B.w^k=i\left(-X+Y\right).w^k=i\left(-a_kw^{k+2}+b_kw^{k-2}\right).
\end{equation}
If we consider a finite-dimensional module $V$ of dimension $n$, we will use
the same basis, with different indexes denoted by $v$, such that
$H.v^k=(n+1-2k)v^k$ and
\begin{equation}\label{a5}
  X.v^k=-(k-1)v^{k-1}
\end{equation}
and
\begin{equation}\label{a7}
  Y.v^k=-(n-k)v^{k+1}.
\end{equation}
We can assume that $v^0=v^{n+1}=0$.

Let us determine $\lambda$s for which it is possible to construct an inner
product $\langle\cdot,\cdot\rangle:W\times W\rightarrow\C$
such that \eqref{c5} is satisfied.
Let us assume that $\langle w^k,w^l\rangle\neq0$ for some $k$ and $l$. Then,
\begin{equation}\nonumber
  \langle iH.w^k,w^l\rangle=\langle w^k,(iH)^*.w^l\rangle
\end{equation}
and \eqref{c5} show that
$ik\langle w^k,w^l\rangle=il\langle w^k,w^l\rangle$. Hence
\begin{equation}\label{a15}
  \langle w^k,w^l\rangle=0\quad\mbox{for all}\ k\neq l.
\end{equation}
Now, from \eqref{a15}, it follows that
\begin{equation}\nonumber
  A^*.w^k=\overline{b_{k+2}}\frac{||w^k||^2}{||w^{k+2}||^2}w^{k+2}+
    \overline{a_{k-2}}\frac{||w^k||^2}{||w^{k-2}||^2}w^{k-2}
\end{equation}
and
\begin{equation}\nonumber
  B^*.w^k=i\left(-\overline{b_{k+2}}\frac{||w^k||^2}{||w^{k+2}||^2}w^{k+2}+
    \overline{a_{k-2}}\frac{||w^k||^2}{||w^{k-2}||^2}w^{k-2}\right).
\end{equation}
Each time we get the same condition:
\begin{equation}\nonumber
  a_k+\overline{b_{k+2}}\frac{||w^k||^2}{||w^{k+2}||^2}=0,\quad\forall k.
\end{equation}
or
\begin{equation}\nonumber
  \frac{a_k}{\overline{b_{k+2}}}=-\frac{||w^k||^2}{||w^{k+2}||^2}
    \in\left(-\infty,0\right),\quad\forall k.
\end{equation}
However, we prefer to multiply it by $b_{k+2}\overline{b_{k+2}}=
||b_{k+2}||^2$ and say that the irreducible representation
is unitary if and only if
\begin{equation}\label{a20}
  a_kb_{k+2}\in(-\infty,0),\quad\forall k.
\end{equation}
We consider an open interval $(-\infty,0)$ since the case $a_kb_{k+2}=0$ leads
to reducibility. It will be explained in Remark \ref{rmsub}.
Relation \eqref{a20} transforms to
\begin{equation}\nonumber
  (\lambda+k+1)(\lambda-(k+1))=\lambda^2-(k+1)^2\in\left(-\infty,0\right)
    ,\quad\forall k.
\end{equation}
Let us recall that $k=2z$ or $k=2z+1$ for $z\in\Z$.
If $k=2z$, $\lambda$ can be equal to $ri$ for $r\in\R$ (it corresponds to
principal series), $r\in(-1,1)$ (it corresponds to complementary series) and
$2m+1$ for $m\in\Z$ (submodules correspond to discrete seris and the trivial
representation). If $k=2z+1$, $\lambda$ can be equal to $ri$ for $r\in\R^*$
(it corresponds to prinicipal series), $0$ (submodules correspond to mock
discrete series) and $2m$ for $m\in\Z$ (submodules correspond to discrete
series).

It remains to analyze (unitary) finite-dimensional representations of the
compact real form of \g. For the beginning, let us choose the basis:
\begin{equation}\nonumber
  -\frac12iH,\quad\frac12(X-Y)=\frac12iB\quad\mbox{and}\quad
    -\frac12i(X+Y)=-\frac12iA.
\end{equation}
Then $\ds -\frac12iH.v^k=-\frac12i(n+1-2k)v^k$,
\begin{equation}\nonumber
  \frac12(X-Y).v^k=\frac12\left(-(k-1)v^{k-1}+(n-k)v^{k+1}\right)
\end{equation}
and
\begin{equation}\nonumber
  -\frac12i(X+Y).v^k=\frac12i\left((k-1)v^{k-1}+(n-k)v^{k+1}\right).
\end{equation}
Element $\ds -\frac12iH$ satisfies \eqref{c5} (for any inner product satisfying
\eqref{a15}). It remains to analyze remaining two elements. In any case,
\eqref{c5} produces the same condition:
\begin{equation}\label{a23}
  ||v^{k+1}||^2=\frac{k}{n-k}||v^k||^2.
\end{equation}
It will be useful to express $||v^k||^2$ in terms of $||v^1||^2$.
Using induction, one can easily show that
\begin{equation}\label{a25}
  ||v^k||^2=\frac{(k-1)!(n-k)!}{(n-1)!}||v^1||^2=
  \frac{1}{\binom{n-1}{k-1}}||v^1||^2
\end{equation}
\begin{example}
Let $V$ be the $\su(2)$ module such that $\dim V=5$ and $||v^1||=1$. Then
\begin{equation}\nonumber
  ||v^2||=\frac{1}{2},\quad||v^3||=\frac{1}{\sqrt{6}},\quad
  	||v^4||=\frac{1}{2},\quad||v^5||=1.
\end{equation}
\end{example}

\section{$(\g,K)$ modules for $SU(2,1)$}\label{secgk}

Let $G=SU(2,1)$. Then $K=S(U(2)\times U(1))=SU(2)\times S^1$,
$\kko=\su(2)\oplus\R$ and $\kk=\esl(2,\C)\oplus\C$.
This \esl(2,\C) can be represented by a triple $(H_\alpha,X_\alpha,Y_\alpha)$.
It remains to set $\C=Z\C$ where $Z=H_\alpha+2H_\beta$. Let us define a basis
for \g. The Cartan subalgebra $\h\subset\kk$ is generated by
\begin{equation}\nonumber
  H_{\alpha}=\left[\begin{array}{ccc}
    1&0&0\\0&-1&0\\0&0&0
    \end{array}\right]\quad\mbox{and}\quad\quad
  H_{\beta}=\left[\begin{array}{ccc}
    0&0&0\\0&1&0\\0&0&-1
    \end{array}\right].
\end{equation}
Now, we define
\begin{equation}\nonumber
  X_{\alpha}=\left[\begin{array}{ccc}
    0&1&0\\0&0&0\\0&0&0
    \end{array}\right],\quad
  X_{\beta}=\left[\begin{array}{ccc}
    0&0&0\\0&0&1\\0&0&0
    \end{array}\right]\quad\mbox{and}\quad
  X_{\alpha+\beta}=\left[\begin{array}{ccc}
    0&0&1\\0&0&0\\0&0&0
    \end{array}\right].
\end{equation}
Elements $Y_{\alpha}$, $Y_{\beta}$ and $Y_{\alpha+\beta}$ are defined similarly.
The next step is to define a basis for \go. We take $iH_{\alpha}$
and $iH_{\beta}$ for the Cartan subalgebra and
\begin{equation}\nonumber
  A_{\alpha}=X_{\alpha}-Y_{\alpha}\quad\mbox{and}\quad
  B_{\alpha}=i(X_{\alpha}+Y_{\alpha})
\end{equation}
for the remainder of \kko. The rest of \go\ is given by
\begin{equation}\nonumber
A_{\beta}=X_{\beta}+Y_{\beta},\quad\quad
B_{\beta}=i(X_{\beta}-Y_{\beta})
\end{equation}
\begin{equation}\label{b5}
A_{\alpha+\beta}=X_{\alpha+\beta}+Y_{\alpha+\beta}\quad\mbox{and}\quad
B_{\alpha+\beta}=i(X_{\alpha+\beta}-Y_{\alpha+\beta})
\end{equation}
One should notice a different sign in expressions for $\alpha$ and $\beta$.

Let us consider irreducible representations of $K$. Since
$\kk=\esl(2,\C)\oplus\C Z$, where $Z=H_\alpha+2H_\beta$, irreducible
representations of \kk\ are irreducible representations of \esl(2,\C) on
which $Z$ acts as a multiplication by scalars. Since \C Z ($\R iZ$)
corresponds to a circle ($K$ is compact), the scalar $m$ is an integer.

\begin{definition}\label{restok}
  We denote $K$ modules by $V_{nm}$ where $n$ is the dimension of the space
  $V_{nm}$ and $m$ is the scalar by which $Z=H_\alpha+2H_\beta$ acts on that
  space. Let $\{v_{nm}^k\}$, $k\in\{1,2,\ldots,n\}$ be a basis for $V_{nm}$
  defined above ($H_\alpha.v^k=(n+1-2k)v^k$, $X.v^k=-(k-1)v^{k-1}$ (as in
  \eqref{a5}) and $Y.v^k=-(n-k)v^{k+1}$).
\end{definition}

Hence, $Z.v_{nm}^k=(H_\alpha+2H_\beta).v_{nm}^k=mv_{nm}^k$
for any element $v_{nm}^k\in V_{nm}$.
Let $V$ be an irreducible $(\g,K)$ module. Then the restriction of $V$ to $K$
has the form
\begin{equation}\nonumber
    V|_K=\bigoplus_{n\in\N,\,m\in\Z}V_{nm}.
\end{equation}
It should be more correct to write $(n,m)\in S(V)$ where
$S(V)\subset \N\times\Z$ is some set which depends on $V$.
For example, it is easy to see that $n+m$ is an odd number. However, we want
to emphasize that $n$ in the natural number and $m$ is an integer.
This set $S(V)$ will be discussed later (after the following theorem which
describes relationship among $K$ types). Now, it is important to emphasize
that we know that the multiplicity of $K$ modules $V_{nm}$ is 1 or,
equivalently, the dimension of the space spanned by the vectors $v_{nm}^1$
is 1 for all $n\in\N$ and $m\in\Z$. It makes this construction possible.
If our group $G$ is more complicated ($SU(2,2)$), multiplicities are bigger
then 1 and the construction is more complex. The author currently works on
this problem.

\begin{theorem}\label{tmdek}
  Let $V$ be an irreducible $(\g,K)$ module and let $\{v_{nm}^k\}$ be a basis
  for $V_{nm}$ as in Definition \ref{restok}. Then
  \begin{align}
    &X_{\alpha+\beta}.v_{nm}^k=a_{nm}v_{n+1\,m+3}^k+
      \frac{k-1}{n-1}c_{nm}v_{n-1\,m+3}^{k-1},\label{b7}\\
    &X_\beta.v_{nm}^k=-a_{nm}v_{n+1\,m+3}^{k+1}+
      \frac{n-k}{n-1}c_{nm}v_{n-1\,m+3}^k,\nonumber\\
    &Y_{\alpha+\beta}.v_{nm}^k=b_{nm}v_{n+1\,m-3}^{k+1}+
    \frac{n-k}{n-1}d_{nm}v_{n-1\,m-3}^k,\label{b9}\\
    &Y_\beta.v_{nm}^k=b_{nm}v_{n+1\,m-3}^k-
    \frac{k-1}{n-1}d_{nm}v_{n-1\,m-3}^{k-1}.\nonumber
  \end{align}
  for some coefficients $a_{nm}$, $b_{nm}$, $c_{nm}$ and $d_{nm}$.
\end{theorem}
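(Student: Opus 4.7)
The plan is to exploit the $K$-module structure of the non-compact part of \g\ under the adjoint action together with the multiplicity-one property of $K$-types emphasized just before the theorem.

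First I would set $\mathfrak p^+:=\mathrm{span}_{\C}(X_{\alpha+\beta},X_\beta)$ and $\mathfrak p^-:=\mathrm{span}_{\C}(Y_{\alpha+\beta},Y_\beta)$, and compute the brackets of $H_\alpha,X_\alpha,Y_\alpha,Z$ against these four vectors. The outcome is that $\mathfrak p^\pm$ are $\mathrm{ad}(\kk)$-stable, each is an irreducible two-dimensional \esl(2,\C)-module, and $Z=H_\alpha+2H_\beta$ acts by $+3$ on $\mathfrak p^+$ and by $-3$ on $\mathfrak p^-$. In the notation of Definition \ref{restok} this identifies $\mathfrak p^+\cong V_{2,3}$ and $\mathfrak p^-\cong V_{2,-3}$, with $X_{\alpha+\beta}$ (resp.\ $Y_\beta$) the $H_\alpha$-weight $+1$ basis element.

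Next, the action $\mathfrak p^+\otimes V_{n,m}\to V$ is $K$-equivariant. Clebsch--Gordan for \esl(2,\C)\ together with additivity of the $Z$-weight gives $V_{2,3}\otimes V_{n,m}\cong V_{n+1,m+3}\oplus V_{n-1,m+3}$ for $n\ge 2$ (only the first summand when $n=1$), so the image lies in $V_{n+1,m+3}\oplus V_{n-1,m+3}\subset V$. Since each $K$-type occurs in $V$ with multiplicity at most one, projecting onto each isotypic component is a \kk-endomorphism of an irreducible, hence a scalar by Schur's lemma; I would call these scalars $a_{nm}$ and $c_{nm}$. The identical argument on the $\mathfrak p^-$ side produces $b_{nm}$ and $d_{nm}$.

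To obtain the explicit formulas \eqref{b7}--\eqref{b9} I would use $H_\alpha$-weight bookkeeping together with the bracket relations. Because $X_\beta$ has $H_\alpha$-weight $-1$ and $X_{\alpha+\beta}$ has $+1$, only $v_{n+1\,m+3}^{k+1}$ and $v_{n-1\,m+3}^k$ can appear in $X_\beta.v_{nm}^k$, and only $v_{n+1\,m+3}^k$ and $v_{n-1\,m+3}^{k-1}$ in $X_{\alpha+\beta}.v_{nm}^k$. To fix the $k$-dependence of the four coefficient sequences I would apply $[X_\alpha,X_\beta]=X_{\alpha+\beta}$ and $[X_\alpha,X_{\alpha+\beta}]=0$ to a generic $v_{nm}^k$, using $X_\alpha.v_{nm}^k=-(k-1)v_{nm}^{k-1}$ from \eqref{a5}: the second relation forces the $v_{n+1\,m+3}^k$-coefficient to be $k$-independent, yielding the constant $a_{nm}$, and the first relation then produces a first-order recursion whose solutions are linear in $k$; normalizing $c_{nm}$ appropriately gives the factors $\tfrac{k-1}{n-1}$ and $\tfrac{n-k}{n-1}$. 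The $Y$-side is handled symmetrically with $Y_\alpha,\mathfrak p^-$ in place of $X_\alpha,\mathfrak p^+$.

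The main obstacle I expect is the bookkeeping in this last step: matching the non-standard normalizations from Definition \ref{restok} so that the prefactor $\frac{1}{n-1}$ and the relative sign in the second and fourth lines appear exactly as written, and checking the boundary $n=1$, where the ratios $\frac{k-1}{n-1},\frac{n-k}{n-1}$ are formally $0/0$ but $V_{2,\pm 3}\otimes V_{1,m}\cong V_{2,m\pm 3}$ has no $V_{0,m\pm 3}$ summand, so $c_{1m}=d_{1m}=0$ and the offending terms vanish under the convention $v_{0,m\pm 3}^\ell=0$.
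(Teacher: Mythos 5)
Your proposal is correct, but it packages the argument differently from the paper. The paper works entirely by hand: it first shows $X_{\alpha+\beta}.v_{nm}^1=a_{nm}v_{n+1\,m+3}^1$ by killing any higher component with $X_\alpha X_{\alpha+\beta}=X_{\alpha+\beta}X_\alpha$ applied to the highest weight vector, then determines $X_\beta.v_{nm}^1$ via $[X_\alpha,X_\beta]=X_{\alpha+\beta}$, and finally runs an induction on $k$, stepping down with $Y_\alpha$ and using $[Y_\alpha,X_\beta]=0$. You instead front-load the structure: identifying $\mathfrak p^\pm\cong V_{2,\pm3}$ under $\mathrm{ad}(\kk)$, invoking Clebsch--Gordan for the tensor product, and applying Schur's lemma together with multiplicity one immediately explains \emph{why} only the two target $K$-types $V_{n\pm1\,m+3}$ occur and why each is governed by a single scalar -- facts the paper only extracts in the course of its computation (and whose reliance on multiplicity one is left implicit there). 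What your approach buys is conceptual clarity and a form that generalizes (it makes transparent where higher multiplicities would break the argument, which the paper only remarks on informally); what the paper's approach buys is that the explicit $k$-dependence $\frac{k-1}{n-1}$, $\frac{n-k}{n-1}$ falls out of the induction with no separate Clebsch--Gordan computation. One caveat on your final step: the two relations you list, $[X_\alpha,X_{\alpha+\beta}]=0$ and $[X_\alpha,X_\beta]=X_{\alpha+\beta}$, pin down the $v_{n+1\,m+3}$-coefficients completely but leave the ratio between the constant in the $v_{n-1\,m+3}^{k-1}$-coefficient of $X_{\alpha+\beta}.v_{nm}^k$ and the constant in the $v_{n-1\,m+3}^{k}$-coefficient of $X_\beta.v_{nm}^k$ undetermined; you need one more relation (e.g.\ $[Y_\alpha,X_\beta]=0$, which is exactly what the paper's induction uses) or an appeal to the Schur rigidity you already established to fix it. Since your framework guarantees the answer is one scalar per target $K$-type, this is a bookkeeping point rather than a gap, and your treatment of the $n=1$ boundary is the same as the paper's convention.
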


\begin{remark}\label{ktyp}
	Possible values of $n$ and $m$ in $V$ will be determined gradually later.
	At this moment we can say that the set $\{v_{nm}^1\}$ looks like a cone
	(or a subset of a cone). Also, one should notice that if $V_{nm}$ is a
	$K$ type of some $(\g,K)$ module $V$, then possible $K$ types  of $V$for
	fixed $n$ and have the form $V_{n\,m+6k}$ for $k\in\Z$ and $K$ types of
	the form $V_{n\,m+6k+2}$ and $V_{n\,m+6k+4}$ belong to some other $(\g,K)$
	modules which have no common $K$ types with $V$. Also, $V_{n\,m+2k+1}$,
	$k\in\Z$, are never $K$ types for any $(\g,K)$ module.
\end{remark}

\begin{remark}
	One can ask if the value of $n$ can be equal to 1 in \eqref{b7} and
	\eqref{b9} since the denominator of some fractions is $n-1$. Let us
	analyze the statement of the theorem. The action of $X_{\alpha+\beta}$
	and $X_\beta$ on vectors in $V_{nm}$ produces vectors which are sums
	of vectors from $V_{n+1\,m+3}$ and $V_{n-1\,m+3}$. It is one of main ideas
	of this paper: the action is not "wild", it can be understood completely.
	If $n=1$, the space $V_{0\,m-3}$ does not exist (the dimension of that
	space is 0) and it should be more
	correct to write $X_{\alpha+\beta}.v_{1m}^k=a_{nm}v_{2\,m+3}^k$ and
	$X_\beta.v_{1m}^k=-a_{nm}v_{2\,m+3}^{k+1}$ instead of \eqref{b7}.
	However, we did not want to write it as separate statements.
	Similar statements are valid for $Y_{\alpha+\beta}$ and $Y_\beta$.
	Finally, $c_{1m}$ and $d_{1m}$ will be 0 in our calculations.
\end{remark}

\begin{remark}
  This theorem shows that the set $\{V_{nm}\}$ and coefficients $a_{nm}$,
  $b_{nm}$, $c_{nm}$ and $d_{nm}$ determine the structure of the $(\g,K)$
  modules $V$. It is clear that these coefficients are not determined uniquely.
  However, the products $a_{nm}d_{n+1\,m+3}$ and $b_{nm}c_{n+1\,m-3}$ are
  unique and it is an important observation.
\end{remark}
  
\begin{remark}
  One can say that irreducible $K$ modules $V_{nm}$ can be represented as
  points. It is the main idea in our construction. We want to understand the
  structure of $K$ modules.
\end{remark}

\begin{proof}
We will prove the first two relations.
Let us consider $X_{\alpha+\beta}.v_{nm}^1$. It is clear that
\begin{equation}\nonumber
  X_{\alpha+\beta}.v_{nm}^1\in\bigoplus_{p\geq n+1}V_{p\,m+3}.
\end{equation}
Let us assume that $X_{\alpha+\beta}.v_{nm}^1=a+b$ for $a\in V_{q\,m+3}$,
where $q>n+1$ and $b\in\bigoplus_{p\geq n+1,\,p\neq q}V_{p\,m+3}$.
Then $X_\alpha X_{\alpha+\beta}.v_{nm}^1\neq0$. It produces a contradiction
since $X_{\alpha+\beta}X_\alpha.v_{nm}^1=0$ and $[X_\alpha,X_{\alpha+\beta}]=0$.
It shows that
\begin{equation}\label{b10}
  X_{\alpha+\beta}.v_{nm}^1=a_{nm}v_{n+1\,m+3}^1
\end{equation}
for some coefficient $a_{nm}$. Now, let us calculate $X_\beta v_{nm}^1$.
Similar calculation shows that
\begin{equation}\nonumber
X_\beta.v_{nm}^1=\lambda v_{n+1\,m+3}^2+c_{nm}v_{n-1\,m+3}^1.
\end{equation}
for some coefficients $\lambda$ and $c_{nm}$. The action of $X_\alpha$ and
\eqref{a5} produces
\begin{equation}\nonumber
X_\alpha X_\beta.v_{nm}^1=-\lambda v_{n+1\,m+3}^1.
\end{equation}
Since $[X_\alpha,X_\beta]=X_{\alpha+\beta}$ and $X_\alpha.v_{nm}^1=0$,
\begin{equation}\label{b15}
  X_{\alpha+\beta}.v_{nm}^1=[X_\alpha,X_\beta].v_{nm}^1=-\lambda v_{n+1\,m+3}^1.
\end{equation}
Now, \eqref{b10} and \eqref{b15} show that $\lambda=-a_{nm}$.

We continue by induction on $k$. The base of induction is just proved.
Let us assume that first two relations are valid for $k$. Since
$[Y_\alpha,X_\beta]=0$,
\begin{align}\nonumber
  X_\beta.v_{nm}^{k+1}&=-\frac1{n-k}X_\beta Y_\alpha.v_{nm}^k
    =-\frac1{n-k}Y_\alpha X_\beta.v_{nm}^k\nonumber\\
  &=-\frac1{n-k}Y_\alpha.\left(-a_{nm}v_{n+1\,m+3}^{k+1}+
    \frac{n-k}{n-1}c_{nm}v_{n-1\,m+3}^k\right)\nonumber\\
  &=\frac{a_{nm}}{n-k}(-(n-k))v_{n+1\,m+3}^{k+2}-
    \frac{c_{nm}}{n-1}(-(n-1-k))v_{n-1\,m+3}^{k+1}\nonumber\\
  &=-a_{nm}v_{n+1\,m+3}^{k+2}+
    \frac{n-(k+1)}{n-1}c_{nm}v_{n-1\,m+3}^{k+1}.\nonumber
\end{align}
Now we use this result and obtain
\begin{gather}
  X_{\alpha+\beta}.v_{nm}^{k+1}=\left(X_\alpha X_\beta-X_\beta X_\alpha\right).
    v_{nm}^{k+1}\nonumber\\
  =X_\alpha.\left(-a_{nm}v_{n+1\,m+3}^{k+2}+
    \frac{n-(k+1)}{n-1}c_{nm}v_{n-1\,m+3}^{k+1}\right)-
    X_\beta.\left((-k)v_{nm}^k\right)\nonumber\\
  =a_{nm}(k+1)v_{n+1\,m+3}^{k+1}-
    \frac{n-(k+1)}{n-1}kc_{nm}v_{n-1\,m+3}^k\nonumber\\
  +k\left(-a_{nm}v_{n+1\,m+3}^{k+1}+
    \frac{n-k}{n-1}c_{nm}v_{n-1\,m+3}^k\right)\nonumber\\
  =a_{nm}v_{n+1\,m+3}^{k+1}+k\left(-\frac{n-(k+1)}{n-1}
    +\frac{n-k}{n-1}\right)c_{nm}v_{n-1\,m+3}^k\nonumber\\
  =a_{nm}v_{n+1\,m+3}^{k+1}+\frac k{n-1}c_{nm}v_{n-1\,m+3}^k.\nonumber
\end{gather}
Remaining two relations can be proved similarly.
\end{proof}

\begin{theorem}\label{tmrel}
	Let $V$ be an irreducible $(\g,K)$ module. Then coefficients
	$a_{nm}$, $b_{nm}$, $c_{nm}$ and $d_{nm}$ satisfy following relations,
	\begin{align}
	  &-\frac1na_{nm}d_{n+1\,m+3}+b_{nm}c_{n+1\,m-3}-c_{nm}b_{n-1\,m+3}=
	   \frac{m-n+1}2,\label{b20}\\
	  &-a_{nm}d_{n+1\,m+3}+\frac1nb_{nm}c_{n+1\,m-3}+d_{nm}a_{n-1\,m-3}=
	  \frac{m+n-1}2,\label{b25}\\
	  &a_{nm}d_{n+1\,m+3}=d_{nm}a_{n-1\,m-3}=b_{nm}c_{n+1\,m-3}=
	   c_{nm}b_{n-1\,m-3}=0\nonumber\\
	  &\hspace*{10mm}\mbox{when }V_{nm}\mbox{ is }K\;\mbox{type and }
	   V_{n\pm1\,m\pm3}\mbox{ is not }K\;\mbox{type}\label{b27}\\
	  &b_{nm}a_{n+1\,m-3}=a_{nm}b_{n+1\,m+3},\label{b30}\\
	  &d_{nm}c_{n-1\,m-3}=c_{nm}d_{n-1\,m+3},\label{b35}\\
	  &(n+1)a_{nm}c_{n+1\,m+3}=nc_{nm}a_{n-1\,m+3},\label{b40}\\
	  &(n+1)b_{nm}d_{n+1\,m-3}=nd_{nm}b_{n-1\,m-3}\label{b45}.
	\end{align}
	for all $n\in\N$ and $m\in\Z$ for which $V_{nm}$ is $K$ type of $V$.
	If the set of $K$ types is given together with relations
	\eqref{b20} -- \eqref{b45}, then it is possible to reconstruct an
	irreducible $(\g,K)$ module $V$.
\end{theorem}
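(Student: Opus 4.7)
The strategy is to extract each of \eqref{b20}--\eqref{b45} from one of the basic commutator identities in $\g$ applied to the $\alpha$-highest weight vector $v_{nm}^1$ of $V_{nm}$. From the matrix realization of Section \ref{secgk} one computes
\[
 [X_\beta,Y_\beta]=H_\beta,\quad [X_{\alpha+\beta},Y_{\alpha+\beta}]=H_\alpha+H_\beta,
\]
\[
[X_\beta,Y_{\alpha+\beta}]=Y_\alpha,\quad [X_{\alpha+\beta},Y_\beta]=X_\alpha,
\]
\[
 [X_\beta,X_{\alpha+\beta}]=[Y_\beta,Y_{\alpha+\beta}]=0.
\]
Since $Z=H_\alpha+2H_\beta$ acts on $V_{nm}$ by $m$ and $H_\alpha.v_{nm}^1=(n-1)v_{nm}^1$, one has $H_\beta.v_{nm}^1=\frac{m-n+1}{2}v_{nm}^1$ and $H_{\alpha+\beta}.v_{nm}^1=\frac{m+n-1}{2}v_{nm}^1$.

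The plan is to expand both sides of each of these commutators on $v_{nm}^1$ using the formulas of Theorem \ref{tmdek} and to match coefficients in the $K$-isotypic decomposition of the result. The image of $v_{nm}^1$ under such a double composition lies in $V_{n+2\,m}\oplus V_{nm}\oplus V_{n-2\,m}$ or in $V_{n\,m+6}$ or $V_{n\,m-6}$, so each commutator produces several independent scalar equations. Specifically, \eqref{b30} comes from the $V_{n+2\,m}$-piece of $[X_{\alpha+\beta},Y_{\alpha+\beta}]=H_\alpha+H_\beta$ and \eqref{b25} from its $V_{nm}$-piece; \eqref{b20} comes from the $V_{nm}$-piece of $[X_\beta,Y_\beta]=H_\beta$; \eqref{b35} from the $V_{n-2\,m}$-piece of $[X_\beta,Y_{\alpha+\beta}]=Y_\alpha$; and \eqref{b40}, \eqref{b45} respectively from the $V_{n\,m+6}$- and $V_{n\,m-6}$-pieces of the two vanishing commutators. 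Only the $k=1$ and occasionally $k=2$ cases of Theorem \ref{tmdek} and the $\esl(2,\C)$-rules \eqref{a5}--\eqref{a7} enter these calculations. The boundary identities \eqref{b27} are then automatic: if some $V_{n\pm1\,m\pm3}$ is not a $K$-type of $V$, then $\g$-stability forces the coefficient that would feed a vector into that missing $K$-type to vanish, which kills the corresponding round-trip product in \eqref{b20}--\eqref{b25}.

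For the converse, given a set of $K$-types together with coefficients $a_{nm}$, $b_{nm}$, $c_{nm}$, $d_{nm}$ satisfying \eqref{b20}--\eqref{b45}, I would define the actions of $X_\beta$, $X_{\alpha+\beta}$, $Y_\beta$, $Y_{\alpha+\beta}$ on every $v_{nm}^k$ by the formulas of Theorem \ref{tmdek} and the action of $\kk$ by the standard $\esl(2,\C)$ rules. It then remains to check the full Lie-bracket structure of $\g$. The brackets internal to $\kk$ and those between $\kk$ and the noncompact generators are automatic because the formulas of Theorem \ref{tmdek} were engineered to be $\kk$-equivariant; the genuine conditions are the six commutators displayed above, and each of them evaluated on $v_{nm}^1$ reduces exactly to one of \eqref{b20}--\eqref{b45}. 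The main obstacle is propagating this verification from the highest-weight vector to all $v_{nm}^k$: this proceeds by the induction used in the proof of Theorem \ref{tmdek}, invoking $[Y_\alpha,X_\beta]=0$ and $[X_\alpha,Y_\beta]=0$ to transport each identity through the $K$-type. Irreducibility of the resulting module is then immediate provided the $K$-types are linked by non-zero transitions, since any nonzero submodule must contain every $V_{nm}$.
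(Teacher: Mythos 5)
Your proposal is correct and follows essentially the same route as the paper: expand the relevant commutators of \g\ on basis vectors using the formulas of Theorem \ref{tmdek}, compare $K$-isotypic components to obtain \eqref{b20}--\eqref{b45}, and reverse the process for the converse by building the module from the formulas and checking consistency. The only cosmetic difference is that the paper extracts \eqref{b20}, \eqref{b25}, \eqref{b30} and \eqref{b35} all from $[X_\beta,Y_\beta]=H_\beta$ applied to a general $v_{nm}^k$ (observing that the resulting $k$-dependent identity \eqref{b50} is a combination of its $k=1$ and $k=n$ cases), whereas you stay at the $\alpha$-highest weight vector and draw on the additional commutators $[X_{\alpha+\beta},Y_{\alpha+\beta}]$ and $[X_\beta,Y_{\alpha+\beta}]$; both work.
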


\begin{remark}
	The theorem enumerates necessary and sufficient conditions for the
	existence of (irreducible) $(\g,K)$ module $V$. 
\end{remark}

\begin{proof}
Let us assume that $(\g,K)$ module exists.
Let us consider	a nonzero element $v_{nm}^k$ and apply the relation
$[X_\beta,Y_\beta]=H_\beta$ on that element. The left hand side is equal to
\begin{gather}
  (X_\beta Y_\beta-Y_\beta X_\beta).v_{nm}^k\nonumber\\
  =X_\beta.\left(b_{nm}v_{n+1\,m-3}^k-
    \frac{k-1}{n-1}d_{nm}v_{n-1\,m-3}^{k-1}\right)\nonumber\\
  -Y_\beta.\left(-a_{nm}v_{n+1\,m+3}^{k+1}+
    \frac{n-k}{n-1}c_{nm}v_{n-1\,m+3}^k\right)\nonumber\\
  =b_{nm}\left(-a_{n+1\,m-3}v_{n+2\,m}^{k+1}+
    \frac{n+1-k}nc_{n+1\,m-3}v_{nm}^k\right)\nonumber\\
  -\frac{k-1}{n-1}d_{nm}\left(-a_{n-1\,m-3}v_{nm}^k+
    \frac{n-k}{n-2}c_{n-1\,m-3}v_{n-2\,m}^{k-1}\right)\nonumber\\
  +a_{nm}\left(b_{n+1\,m+3}v_{n+2\,m}^{k+1}-
    \frac knd_{n+1\,m+3}v_{nm}^k\right)\nonumber\\
  -\frac{n-k}{n-1}c_{nm}\left(b_{n-1\,m+3}v_{nm}^k-
    \frac{k-1}{n-2}d_{n-1\,m+3}v_{n-2\,m}^{k-1}\right).\nonumber
\end{gather}
The right hand side is equal to
\begin{equation}\nonumber
  H_\beta.v_{nm}^k=\frac{m-n-1+2k}2v_{nm}^k.
\end{equation}
One can compare coefficients of $v_{n-2\,m}^{k-1}$ and obtain \eqref{b35}.
Similarly, coefficient of $v_{n+2\,m}^{k+1}$ produces \eqref{b30}.
Finally, coefficient of $v_{nm}^k$ produces
\begin{gather}
  -\frac kna_{nm}d_{n+1\;m+3}+
    \frac{n+1-k}nb_{nm}c_{n+1\,m-3}\nonumber\\
  -\frac{n-k}{n-1}c_{nm}b_{n-1\,m+3}+
    \frac{k-1}{n-1}d_{nm}a_{n-1\,m-3}=
    \frac{m-n-1+2k}2.\label{b50}
\end{gather}
Also, $a_{nm}d_{n+1\;m+3}=0$ if $V_{n+1\,m+3}$ is not $K$ type of $V$,
$b_{nm}c_{n+1\;m-3}=0$ if $V_{n+1\,m-3}$ is not $K$ type of $V$,
$c_{nm}b_{n-1\;m+3}=0$ if $V_{n-1\,m+3}$ is not $K$ type of $V$ and
$d_{nm}a_{n-1\;m-3}=0$ if $V_{n-1\,m-3}$ is not $K$ type of $V$ and it is
\eqref{b27}.
For $k=1$, one obtains \eqref{b20} and for $k=n$ it transforms to \eqref{b25}.
It is easy to check that \eqref{b50} is a linear combination of \eqref{b20}
and \eqref{b25}, namely
\begin{equation}\nonumber
  \frac{n-k}{n-1}\eqref{b20}+\frac{k-1}{n-1}\eqref{b25}=\eqref{b50}.
\end{equation}
It shows that it is enough to consider \eqref{b20} and \eqref{b25}. These
two relations are more convenient then \eqref{b50} since $k$ does not appear
in \eqref{b20} and \eqref{b25}.

If we apply the relation $X_\beta X_{\alpha+\beta}=X_{\alpha+\beta}X_\beta$
on the element $v_{nm}^k$ we obtain \eqref{b40}.
Finally, if we apply the relation
$Y_\beta Y_{\alpha+\beta}=Y_{\alpha+\beta}Y_\beta$ on the element $v_{nm}^k$
we obtain \eqref{b45}.
One can check all other commutation relations in \g, but it will not produce
new conditions on coefficients $a_{nm}$, $b_{nm}$, $c_{nm}$ and $d_{nm}$. It
shows that \eqref{b20} -- \eqref{b45} have to be satisfied.

Now, let us assume that the set of $K$ types together with
coefficients $a_{nm}$, $b_{nm}$, $c_{nm}$ and $d_{nm}$ are given and
\eqref{b20} -- \eqref{b45} are satisfied.
At the beginning of the proof of the Theorem \ref{tmgk}, we will show that
$K$ types (or vectors $v_{nm}^1$) form a cone or a subset of a cone (a strip
or a parallelogram). The proof is technical and follows directly from
\eqref{b20} -- \eqref{b45}. Also, $a_{nm},b_{nm},c_{nm}$ and $d_{mm}$ are
different from 0 if $V_{nm}$ and $V_{n\pm1\,m\pm3}$ are $K$ types of an
irreducible $(\g,K)$ module $V$. Let us choose any $K$ type $V_{nm}$ of $V$
and $v_{nm}^1\in V_{nm}$. Then vectors $v_{nm}^k$ are defined by \eqref{a7},
where $Y$ in \eqref{a7} is $Y_\alpha$. Vectors $v_{n+1\,m\pm3}^1$ are defined
by $v_{n+1\,m+3}^1=\frac1{a_{nm}}X_{\alpha+\beta}.v_{nm}^1$ (by \eqref{b7}) and
$v_{n+1\,m-3}^1=\frac1{b_{nm}}Y_\beta.v_{nm}^1$. Vectors $v_{n+1\,m\pm3}^k$ are
defined by \eqref{a7} (again $Y$ in \eqref{a7} is $Y_\alpha$). Vectors
$v_{n-1\,m\pm3}^1$ are defined by
$v_{n-1\,m+3}^1=\frac1{c_{nm}}(X_\beta.v_{nm}^1+a_{nm}v_{n+1\,m+3}^2)$ and
$v_{n-1\,m-3}^1=\frac1{d_{nm}}(Y_{\alpha+\beta}.v_{nm}^1-b_{nm}v_{n+1\,m-3}^2)$.
Again, vectors $v_{n-1\,m\pm3}^k$ are defined by \eqref{a7}. Since, the
structure of $K$ types is simple, all $K$ types can be reached in this way.
One can ask if this
definition is good, or equivalently, is it possible to get two different
values for the same vector. It is not possible since all commutation relations
are satisfied. For example $v_{n+2\,m}^1$ can be obtained as
\begin{equation}\nonumber
    v_{n+2\,m}^1=\frac1{a_{n+1\,m-3}}X_{\alpha+\beta}.
    \frac1{b_{nm}}Y_\beta.v_{nm}^1
\end{equation}
and
\begin{equation}\nonumber
    v_{n+2\,m}^1=\frac1{b_{n+1\,m+3}}Y_\beta.
    \frac1{a_{nm}}X_{\alpha+\beta}.v_{nm}^1
\end{equation}
The commutator of $X_{\alpha+\beta}$ and $Y_\beta$ is $X_\alpha$, but
$X_\alpha.v_{nm}^1=0$. Also $a_{n+1\,m-3}b_{nm}=b_{n+1\,m+3}a_{nm}$ by
\eqref{b30}. We can also check it for $k>1$. If we start with the
definition of $v_{n+1\,m+3}^1$,
\begin{equation}\nonumber
    v_{n+1\,m+3}^1=\frac1{a_{nm}}X_{\alpha+\beta}.v_{nm}^1
\end{equation}
and act by $Y_\alpha$ on it, we obtain
\begin{equation}\nonumber
    Y_\alpha.v_{n+1\,m+3}^1=\frac1{a_{nm}}\left(X_{\alpha+\beta}Y_\alpha+
    X_\beta\right).v_{nm}^1.
\end{equation}
One can apply formulas from Theorem \ref{tmdek} and get equality. However,
using these formulas will repeat the proof and give the spirit of that theorem.
\end{proof}

It is more convenient to work with
$a_{nm}d_{n+1\,m+3}$ and $b_{nm}c_{n+1\,m-3}$ then $a_{nm}$, $b_{nm}$, $c_{nm}$
and $d_{nm}$. The reason is very simple. The later expressions are not
determined uniquely since they depend on the choice of vectors $v_{nm}^k$.
Hence, we plan to determine expressions $a_{nm}d_{n+1\,m+3}$ and
$b_{nm}c_{n+1\,m-3}$ using \eqref{b20} and \eqref{b25} and
then show that it is possible to determine coefficients
$a_{nm}$, $b_{nm}$, $c_{nm}$ and $d_{nm}$ such that all relations above are
satisfied (and given $(\g,K)$ module exists). We will be able to give explicit
formulas for expressions $a_{nm}d_{n+1\,m+3}$ and $b_{nm}c_{n+1\,m-3}$.
Then, it is easy to give formulas for coefficients
$a_{nm}$, $b_{nm}$, $c_{nm}$ and $d_{nm}$.

Let us write \eqref{b45} for $m+6$ instead of $m$ and multiply by \eqref{b40}.
It produces
\begin{gather}
  (n+1)^2a_{nm}d_{n+1\,m+3}b_{n\,m+6}c_{n+1\,m+3}\nonumber\\
  =n^2a_{n-1\,m+3}d_{n\,m+6}b_{n-1\,m+3}c_{nm}\label{b55}
\end{gather}
and
\begin{equation}\label{b60}
  \frac{a_{nm}d_{n+1\,m+3}}{a_{n-1\,m+3}d_{n\,m+6}}
  \frac{b_{n\,m+6}c_{n+1\,m+3}}{b_{n-1\,m+3}c_{nm}}=\frac{n^2}{(n+1)^2}.
\end{equation}
Now, \eqref{b30} and \eqref{b35} show that
$\ds \frac{a_{nm}}{a_{n-1\,m+3}}=\frac{b_{n\,m+6}}{b_{n-1\,m+3}}$ and
$\ds \frac{d_{n+1\,m+3}}{d_{n\,m+6}}=\frac{c_{n+1\,m+3}}{c_{nm}}$.
We conclude that \eqref{b60} transforms to
\begin{equation}\nonumber
  \left(\frac{a_{nm}d_{n+1\,m+3}}{a_{n-1\,m+3}d_{n\,m+6}}\right)^2
  =\frac{n^2}{(n+1)^2}
\end{equation}
or
\begin{equation}\nonumber
  \frac{a_{nm}d_{n+1\,m+3}}{a_{n-1\,m+3}d_{n\,m+6}}=
  \frac{b_{n\,m+6}c_{n+1\,m+3}}{b_{n-1\,m+3}c_{nm}}=\pm\frac{n}{n+1}.
\end{equation}
It is possible to give more precise statement. Using induction, one can obtain
\begin{equation}\nonumber
  \frac{a_{nm}d_{n+1\,m+3}}{a_{n-1\,m+3}d_{n\,m+6}}=
  \frac{b_{n\,m+6}c_{n+1\,m+3}}{b_{n-1\,m+3}c_{nm}}=\frac{n}{n+1}.
\end{equation}
This relation will be a consequence of formulas \eqref{b75} and \eqref{b80}.
However, we mention it now in order to give a better insight into the
structure of coefficients $a_{nm}$, $b_{nm}$, $c_{nm}$ and $d_{nm}$.

\begin{theorem}\label{tmgk}
  For any $c\in\C$ and $t\in\Z$ there exist a $(\g,K)$ module $V(c,2t)$
  such that
  \begin{equation}\nonumber
    V(c,2t)|_K=V_{1\,2t}\oplus\bigoplus_{n,m\in\Z,\,n>1}V_{nm}
  \end{equation}
  and $a_{1\,2t}d_{2\,2t+3}=c-\frac12t$. This module can be reducible. Any other
  irreducible $(\g,K)$ module $V$ is a submodule, quotient or subquotient
  of some $V(c,2t)$.
\end{theorem}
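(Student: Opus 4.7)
The plan is to construct $V(c, 2t)$ explicitly by prescribing its $K$-type support and its structure coefficients, invoke the sufficiency half of Theorem~\ref{tmrel} to get a genuine $(\g, K)$ module, and then argue that every irreducible $(\g, K)$ module arises as a subquotient of some $V(c, 2t)$.

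First I would designate $V_{1, 2t}$ as the apex of a cone of $K$-types and use the nonexistence of $V_{0, 2t \pm 3}$ to impose $c_{1, 2t} = d_{1, 2t} = 0$. With these boundary values, both \eqref{b20} and \eqref{b25} at $(n, m) = (1, 2t)$ collapse to the single equation $-a_{1, 2t} d_{2, 2t+3} + b_{1, 2t} c_{2, 2t-3} = t$, leaving exactly one free parameter; setting $a_{1, 2t} d_{2, 2t+3} = c - t/2$ defines $c$ and forces $b_{1, 2t} c_{2, 2t-3} = c + t/2$. The relations \eqref{b40} and \eqref{b45} applied at the tip then force $c_{2, 2t+3} = d_{2, 2t-3} = 0$, which correctly truncates the apex so that $V_{1, 2t + 6k}$ does not occur for $k \neq 0$, matching the $K$-type pattern stated in the theorem.

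The next step is to propagate the products $a_{nm} d_{n+1, m+3}$ and $b_{nm} c_{n+1, m-3}$ throughout the cone by an induction that uses \eqref{b20} and \eqref{b25} at each interior vertex together with the ratio $\tfrac{n}{n+1}$ identity derived just before the theorem. This should yield closed-form expressions for every product, after which a factorization into individual coefficients $a_{nm}, b_{nm}, c_{nm}, d_{nm}$ compatible with \eqref{b30}--\eqref{b45} can be chosen; the remaining ambiguity corresponds only to rescaling the basis vectors $v_{nm}^1$. Theorem~\ref{tmrel} then certifies that this data defines the $(\g, K)$ module $V(c, 2t)$. For the classification statement, any irreducible $(\g, K)$ module $V$ has a $K$-type pattern which, by Theorem~\ref{tmdek} and Remark~\ref{ktyp}, is contained in a cone, strip or parallelogram; one extrapolates it downward to an apex $V_{1, 2t'}$, reads off the corresponding parameter $c'$ from the action coefficients, and then the universality of $V(c', 2t')$ produces a $\g$-equivariant map exhibiting $V$ as a submodule, quotient or subquotient.

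The main obstacle I anticipate is the consistency of the propagation step: different paths from the tip to a given $(n, m)$ combine \eqref{b20}--\eqref{b45} in different ways and must yield the same value for each product. This ultimately rests on the linear dependence $\tfrac{n-k}{n-1}\eqref{b20} + \tfrac{k-1}{n-1}\eqref{b25} = \eqref{b50}$ observed in the proof of Theorem~\ref{tmrel}, but organizing it into a clean induction over the cone, together with correct handling of the boundary vertices where \eqref{b27} becomes active and the degenerate case $c = t/2$ in which some of the outer propagation coefficients vanish, requires careful combinatorial bookkeeping.
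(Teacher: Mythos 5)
Your proposal follows essentially the same route as the paper: fix the apex $V_{1\,2t}$ with $c_{1\,2t}=d_{1\,2t}=0$ so that \eqref{b20} and \eqref{b25} leave one free parameter $c$, propagate the products $a_{nm}d_{n+1\,m+3}$ and $b_{nm}c_{n+1\,m-3}$ over the cone, choose compatible individual coefficients, invoke the sufficiency direction of Theorem~\ref{tmrel}, and embed any $W(r,s)$ with $r>1$ by extrapolating its apex back to $n=1$. The only real difference is that the paper disposes of the path-consistency issue you flag by simply exhibiting the closed-form products \eqref{b75}--\eqref{b80} and explicit coefficients \eqref{b85}--\eqref{b100} and verifying \eqref{b20}--\eqref{b45} directly, rather than running the induction.
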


\begin{remark}\label{rmsub}
	We are concentrated on irreducible $(\g,K)$ modules. Reducibility of
	modules $V(c,2t)$ will be obtained when some product(s) $a_{nm}d_{n+1\,m+3}$
	or $b_{nm}c_{n+1\,m-3}$ are equal to 0. The theorem says that
	irreducible $(\g,K)$ modules can be obtained a submodules, quotients
	or subquotients. Once we have formulas \eqref{b85} -- \eqref{b100},
	it will be possible to determine if we have a submodule, quotient or
	subquotient. However, it will not be important for us. We are looking for
	irreducible $(\g,K)$ modules and their description. Finally, it is,
	maybe, possible to find another choice of coefficients
	$a_{nm}$, $b_{nm}$, $c_{nm}$ and $d_{nm}$ in \eqref{b85} -- \eqref{b100}
	and it would lead to another relationship among our modules.
	Hence, by abuse of notation, we will say just a submodule.
\end{remark}

\begin{proof}
Let us put $a_{1\,2t}d_{2\,2t+3}=c-\frac12t$.
Then \eqref{b20} (and also \eqref{b25}) shows that
\begin{equation}\nonumber
  b_{1\,2t}c_{2\,2t-3}=c+\frac12t.
\end{equation}
If $a_{1+k\,2t+3k}d_{2+k\,2t+3+3k}\neq0$,
then $b_{1+k\,2t+6+3k}c_{2+k\,2t+3+3k}=0$ for $k\geq0$ (by \eqref{b55}).
If $b_{1+k\,2t-3k}c_{2+k\,2t-3-3k}\neq0$,
then $a_{1+k\,2t-6-3k}a_{2+k\,2t-3-3k}=0$ for $k\geq0$.
It means that $K$ modules of the irreducible component of $V(c,2t)$ can form
a cone (if $a_{1+k\,2t+3k}d_{2+k\,2t+3+3k}\neq0$ and
$b_{1+k\,2t-3k}c_{2+k\,2t-3-3k}\neq0$ for $k\geq0$),
a strip (if $a_{1+k\,2t+3k}d_{2+k\,2t+3+3k}=0$ for some $k\in\N\cup\{0\}$ or
$b_{1+k\,2t-3k}c_{2+k\,2t-3-3k}=0$ for some $k\in\N\cup\{0\}$) or a
parallelogram (if $a_{1+k\,2t+3k}d_{2+k\,2t+3+3k}=0$ for some $k\in\N\cup\{0\}$
and $b_{1+l\,2t-3l}c_{2+l\,2t-3-3l}=0$ for some $l\in\N\cup\{0\}$).

We claim that our expressions $ad$ and $bc$ are determined uniquely. Relations
\eqref{b20} and \eqref{b25}, for $n>1$, produce two independent equations.
It is enough to walk from one vertex to another where two expressions
$ad$ and $bc$ are already determined and calculate remaining two.
A reader can easily reconstruct the path.
Formulas for the vertex which is obtained by moving $p$ steps in the
$\alpha+\beta$ direction and $q$ steps in $-\beta$ direction, have the form
\begin{equation}\label{b75}
  a_{1+p+q\,2t+3p-3q}d_{2+p+q\,2t+3p-3q+3}=\frac{p+1}{p+q+2}
    \left(2c-(p+1)t-p(p+2)\right)
\end{equation}
and
\begin{equation}\label{b80}
  b_{1+p+q\,2t+3p-3q}c_{2+p+q\,2t+3p-3q-3}=\frac{q+1}{p+q+2}
    \left(2c+(q+1)t-q(q+2)\right).
\end{equation}
They can be checked directly. One could write $n+1$ instead of $p+q+2$ in
denominators of \eqref{b75} and \eqref{b80}.

Coefficients $a_{nm}$, $b_{nm}$, $c_{nm}$ and $d_{nm}$ can be defined by
\begin{align}
  &a_{1+p+q\,2t+3p-3q}=2c-(p+1)t-p(p+2),\label{b85}\\
  &b_{1+p+q\,2t+3p-3q}=2c+(q+1)t-q(q+2),\nonumber\\
  &c_{2+p+q\,2t+3p-3q-3}=\frac{q+1}{p+q+2},\nonumber\\
  &d_{2+p+q\,2t+3p-3q+3}=\frac{p+1}{p+q+2}.\label{b100}
\end{align}
One can check that \eqref{b30} -- \eqref{b45} are satisfied.
By Theorem \ref{tmrel}, $(\g,K)$ module $V(c,2t)$ is well defined.

Now, it remains to show that any other module is a submodule (see
Remark \ref{rmsub}) of some $V(c,2t)$.
Let us consider some irreducible $(\g,K)$ module $W(r,s)$, $r\in\N$, $r>1$ and
$s\in\Z$ of the form
\begin{equation}\nonumber
  W(r,s)|_K=V_{rs}\oplus\bigoplus_{n,m\in\Z,\,n>r}V_{nm}
\end{equation}
Let us notice that $r+s=2z+1$ for some $z\in\Z$.
This time (since $r>1$), the system of two equations produced by
\eqref{b20} and \eqref{b25} has a unique solution. It shows that coefficients
$ad$ and $bc$ are uniquely determined for the fixed choice of $r$ and $s$.
It remains to show that $W(r,s)$ is a submodule of some $V(c,2t)$ but it is
straightforward: $W(r,s)$ is submodule of
\begin{equation}\nonumber
  V\left(\frac{(r-1)(-r+1+s)-2}{4},-3r+3+s\right)
\end{equation}
and
\begin{equation}\nonumber
  V\left(\frac{(r-1)(-r+1-s)-2}{4},3r-3+s\right).
\end{equation}
Since $r+s=2z+1$, $-3r+3+s=2z-4r+4$.
\end{proof}

\begin{example}
Let us consider the module $W(4,3)$. It is a submodule of
$\ds V\left(-\frac12,-6\right)$ and $V(-5,12)$. It is a nice exercise to
calculate products $a_{nm}d_{n+1\,m+3}$ and $b_{nm}c_{n+1\,m-3}$ for $W(4,3)$
using \eqref{b20} and \eqref{b25} (solving system for each vertex) and
compare with \eqref{b75} and \eqref{b80} for $\ds V\left(-\frac12,-6\right)$
and $V(-5,12)$.
\end{example}

\section{Unitary dual of $SU(2,1)$}\label{secunit}

For the beginning we give a definition of unitary $(\g,K)$ modules for any
real reductive group.

\begin{definition}\label{defunit}
	Unitary $(\g,K)$ module $V$ is a $(\g,K)$ module equipped with the inner
	product $\langle\cdot,\cdot\rangle\rightarrow\C$ such that
	\begin{equation}\label{c5}
	  (X^*+X).v=0,\quad\forall X\in\go,\;\forall v\in V.
	\end{equation}
	and the action of $K$ is unitary.
\end{definition}

\begin{remark}
	Since $K$ acts on finite-dimensional spaces, the action is automatically
	unitary on $K_0$. Hence, we have to check that the action is unitary
	only for some representatives of connected components. Since the group
	$G=SU(2,1)$ is connected, it remains to check only \eqref{c5}.
\end{remark}

Now, let us construct the inner product mentioned in Definition \ref{defunit}.
Using the same way of reasoning as we did for \eqref{a15}, one can conclude
that
\begin{equation}\nonumber
  \langle v_{nm}^k,v_{rs}^l\rangle=0,\quad
  v_{nm}^k\in V_{nm},\;v_{rs}^l\in V_{rs}
\end{equation}
for $s\neq m$ or $r-2l\neq n-2k$. It remains to consider the situation
when $s=m$ and $r-2l=n-2k$.

\begin{lemma}\label{lemort}
	Let $V$ be a unitary $(\g,K)$ module,
	$v_{nm}^k\in V_{nm}$ and $v_{n-2k+2l\,m}^{l}\in V_{n-2k+2l\,m}$. Then
	\begin{equation}\nonumber
	  \langle v_{nm}^k,v_{n-2k+2l\,m}^{l}\rangle\neq0
	\end{equation}
	if and only if $k=l$.
\end{lemma}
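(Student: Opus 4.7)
The plan is to exhibit a self-adjoint operator built from $\kk$ whose eigenvalues separate $v_{nm}^k$ from $v_{n-2k+2l\,m}^{l}$ when $k\neq l$, and then read off the orthogonality from the standard spectral argument. The ``if'' direction is immediate: when $k=l$ we have $r:=n-2k+2l=n$, so the pairing is the squared norm of the nonzero basis vector $v_{nm}^k$, which is automatically nonzero. The whole content is therefore the implication $k\neq l\Rightarrow\langle v_{nm}^k,v_{r\,m}^l\rangle=0$.

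The first step would be to derive $X_\alpha^*=Y_\alpha$ (and symmetrically $Y_\alpha^*=X_\alpha$) from \eqref{c5} applied to the two elements $A_\alpha=X_\alpha-Y_\alpha\in\go$ and $B_\alpha=i(X_\alpha+Y_\alpha)\in\go$: adding and subtracting the resulting identities $A_\alpha^*=-A_\alpha$ and $B_\alpha^*=-B_\alpha$ isolates the adjoints of $X_\alpha$ and $Y_\alpha$. This immediately gives $(X_\alpha Y_\alpha)^*=Y_\alpha^*X_\alpha^*=X_\alpha Y_\alpha$, so $X_\alpha Y_\alpha$ is self-adjoint. A single application of \eqref{a5} and \eqref{a7} then yields the eigenvalue $X_\alpha Y_\alpha.v_{nm}^k=k(n-k)v_{nm}^k$, and likewise $X_\alpha Y_\alpha.v_{r\,m}^l=l(r-l)v_{r\,m}^l$.

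Applying self-adjointness to these two eigenvectors gives
\begin{equation}\nonumber
  \bigl[k(n-k)-l(r-l)\bigr]\langle v_{nm}^k,v_{r\,m}^l\rangle=0.
\end{equation}
Substituting $r=n-2k+2l$ simplifies the bracket to $(k-l)(n-k+l)$. Since $1\leq k\leq n$ and $l\geq 1$, the factor $n-k+l\geq l\geq 1$ is strictly positive, so the bracket vanishes exactly when $k=l$; in every other case the inner product must be $0$.

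The only modest obstacle I anticipate is spotting the correct self-adjoint operator. A naive attempt with $A_\alpha$ or $B_\alpha$ is useless: each shifts the $H_\alpha$-eigenvalue, so the four inner products produced by the skew-adjointness identity $\langle A_\alpha.u,w\rangle+\langle u,A_\alpha.w\rangle=0$ already vanish by the $H_\alpha$-weight orthogonality recorded just before the lemma, yielding no information. Using instead the scalar-valued operator $X_\alpha Y_\alpha$ (morally the $\su(2)$-Casimir evaluated on the unique $H_\alpha$-weight line in each $V_{nm}$) is the key observation that makes the argument go through.
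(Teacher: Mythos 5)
Your argument is correct, and it takes a genuinely different route from the paper. The paper also works with $A_\alpha$, but in a ladder fashion: assuming $l>k$ with $k$ minimal such that $\langle v_{nm}^k,v_{n-2k+2l\,m}^{l}\rangle\neq0$, it pairs $A_\alpha.v_{nm}^k$ against the \emph{index-shifted} vector $v_{n-2k+2l\,m}^{l-1}$ (so the weight mismatch you point out is circumvented), kills the left-hand side by minimality of $k$ plus weight orthogonality, and extracts a nonzero multiple of $\langle v_{nm}^k,v_{n-2k+2l\,m}^{l}\rangle$ on the right-hand side via $A_\alpha^*=-A_\alpha$ --- a contradiction. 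Your proof replaces this induction-by-minimality with a one-shot spectral argument: from $A_\alpha^*=-A_\alpha$ and $B_\alpha^*=-B_\alpha$ you correctly get $X_\alpha^*=Y_\alpha$, hence $X_\alpha Y_\alpha$ is self-adjoint with real eigenvalues $k(n-k)$ and $l(r-l)$ on the two vectors, and the factorization $k(n-k)-l(r-l)=(k-l)(n-k+l)$ with $n-k+l\geq l\geq1$ does exactly what you claim. What your approach buys is the elimination of the extremality bookkeeping (no need to choose a smallest $k$ or track which cross terms vanish for which reason); what the paper's approach buys is that it only ever uses first-order elements of $\go$ and the already-established weight orthogonality, which keeps it closer in spirit to the computations with $A_{\alpha+\beta}^*$ that immediately follow the lemma. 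Both are complete proofs; yours is arguably the cleaner one here, being the standard ``distinct eigenvalues of a self-adjoint operator'' argument applied to the $\su(2)$-Casimir restricted to a weight line.
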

\begin{proof}
Let us assume that $l>k$,
\begin{equation}\label{c10}
  \langle v_{nm}^k,v_{n-2k+2l\,m}^{l}\rangle\neq0
\end{equation}
and $k$ is the smallest possible with that property. Now,
\begin{equation}\nonumber
  \langle A_{\alpha}.v_{nm}^k,v_{n-2k+2l\,m}^{l-1}\rangle=
  \langle -(k-1)v_{nm}^{k-1}+(n-k)v_{nm}^{k+1},v_{n-2k+2l\,m}^{l-1}\rangle=0.
\end{equation}
By \eqref{c5},
\begin{equation}\nonumber
  \langle v_{nm}^k,A_{\alpha}^*.v_{n-2k+2l\,m}^{l-1}\rangle=
  \langle v_{nm}^k,(l-2)v_{n-2k+2l\,m}^{l-2}+
    (n-2k+l+1)v_{n-2k+2l\,m}^{l}\rangle\neq0.
\end{equation}
It gives a contradiction to assumption \eqref{c10}.
\end{proof}

Let us assume that unitary $(\g,K)$ module $V$ is given.
We want to find relationship among coefficients
$a_{nm}$, $b_{nm}$, $c_{nm}$ and $d_{nm}$. Relation
\begin{equation}\nonumber
  \langle A_{\alpha+\beta}.v_{n-1\,m-3}^k,v_{nm}^k\rangle=
  \langle v_{n-1\,m-3}^k,A_{\alpha+\beta}^*.v_{nm}^k\rangle,
\end{equation}
produces
\begin{equation}\label{c15}
  a_{n-1\,m-3}||v_{nm}^k||^2=
  -\frac{n-k}{n-1}\overline{d_{nm}}||v_{n-1\,m-3}^k||^2.
\end{equation}
Calculation is straightforward. Operator $A_{\alpha+\beta}$ is given by
\eqref{b5}, \eqref{b7} and \eqref{b9}, $A_{\alpha+\beta}^*$ by \eqref{c5} and
Lemma \ref{lemort} is used for both sides. Relation \eqref{c15}, at first
glance, does not look good. Namely, it gives a relationship between
$a_{n-1\,m-3}$ and $d_{nm}$, but it depends on $k$. Let us write it for
$k=1$. It transforms to
\begin{equation}\label{c20}
  a_{n-1\,m-3}||v_{nm}^1||^2=-\overline{d_{nm}}||v_{n-1\,m-3}^1||^2.
\end{equation}
One can apply \eqref{a25} on both sides and obtain
\begin{equation}\nonumber
  a_{n-1\,m-3}\binom{n-1}{k-1}||v_{nm}^k||^2=
  -\overline{d_{nm}}\binom{n-2}{k-1}||v_{n-1\,m-3}^k||^2
\end{equation}
and it is easy to recognize \eqref{c15}. Hence, it is enough to consider
\eqref{c20}. Relation
\begin{equation}\nonumber
  \langle A_{\alpha+\beta}.v_{n+1\,m-3}^{k+1},v_{nm}^k\rangle=
  \langle v_{n+1\,m-3}^{k+1},A_{\alpha+\beta}^*.v_{nm}^k\rangle,
\end{equation}
produces
\begin{equation}\nonumber
  \frac{k}{n}c_{n-1\,m-3}||v_{nm}^k||^2=
  -\overline{b_{nm}}||v_{n+1\,m-3}^{k+1}||^2.
\end{equation}
Using \eqref{a23} it transforms to
\begin{equation}\nonumber
  \frac{k}{n}c_{n+1\,m-3}||v_{nm}^k||^2=
  -\overline{b_{nm}}\frac{k}{n+1-k}||v_{n+1\,m-3}^{k}||^2.
\end{equation}
and
\begin{equation}\nonumber
  c_{n+1\,m-3}||v_{nm}^k||^2=
  -\overline{b_{nm}}\frac{n}{n+1-k}||v_{n+1\,m-3}^{k}||^2.
\end{equation}
Again, it is enough to consider this expression for $k=1$,
\begin{equation}\label{c25}
  c_{n+1\,m-3}||v_{nm}^1||^2=
  -\overline{b_{nm}}||v_{n+1\,m-3}^{1}||^2.
\end{equation}
We continue and consider relations
\begin{equation}\nonumber
  \langle A_{\alpha+\beta}.v_{n-1\,m+3}^{k-1},v_{nm}^k\rangle=
  \langle v_{n-1\,m+3}^{k-1},A_{\alpha+\beta}^*.v_{nm}^k\rangle,
\end{equation}
and
\begin{equation}\nonumber
  \langle A_{\alpha+\beta}.v_{n+1\,m+3}^{k},v_{nm}^k\rangle=
  \langle v_{n+1\,m+3}^{k},A_{\alpha+\beta}^*.v_{nm}^k\rangle.
\end{equation}
However, they do not produce new relation among coefficients $a_{nm}$,
$b_{nm}$, $c_{nm}$ and $d_{nm}$. The same procedure can be repeated for
$B_{\alpha+\beta}$, $A_{\beta}$ and $B_{\beta}$ but is will not produce
new relation. Hence, only \eqref{c20} and \eqref{c25} have to be satisfied.

Now, let us go in opposite direction. We want to find conditions on
coefficients $a_{nm}$, $b_{nm}$, $c_{nm}$ and $d_{nm}$ which will lead us to
unitary (irreducible) $(\g,K)$ module $V$. Lemma \ref{lemort} shows that the
inner product on $V$ such that $K$ acts by unitary operators is given by
expressions $||v_{nm}^1||^2$.
Relation \eqref{c20}, for irreducible $V$, shows that
\begin{equation}\label{c30}
  a_{nm}d_{n+1\,m+3}\in(-\infty,0)
\end{equation}
and \eqref{c25} shows that
\begin{equation}\label{c35}
  b_{nm}c_{n+1\,m-3}\in(-\infty,0).
\end{equation}
One should compare \eqref{c30} and \eqref{c35} with \eqref{a20}.
We claim that it is enough to satisfy \eqref{c30} and \eqref{c35}.
Hence, one has to define the "norm"
for each $V_{nm}$ such that \eqref{c20} and \eqref{c25} are satisfied.
The expression "norm" of $V_{nm}$, by \eqref{a25}, means $||v_{nm}^1||^2$.
It is enough to start with any $K$ type $V_{nm}$ and then define
norms of other $K$ types using \eqref{c20} and \eqref{c25}.
We have to prove that this construction is good. Let us assume that
the norm of $V_{nm}$ is given. We have to show that norms of
$V_{n+2\,m}$, $V_{n\,m+6}$, $V_{n\,m-6}$ and $V_{n-2\,m}$ are well defined.
The norm of $V_{n+2\,m}$ can be calculated in two different ways.
Using \eqref{c25} and \eqref{c20}, one obtains
\begin{equation}\nonumber
  ||v_{n+2\,m}^1||^2=
  -\frac{c_{n+2\,m}}{\overline{b_{n+1\,m+3}}}||v_{n+1\,m+3}^1||^2=
  \frac{c_{n+2\,m}}{\overline{b_{n+1\,m+3}}}\cdot
  \frac{\overline{d_{n+1\,m+3}}}{a_{nm}}||v_{nm}^1||^2.
\end{equation}
Similarly,
\begin{equation}\nonumber
  ||v_{n+2\,m}^1||^2=
  -\frac{\overline{d_{n+2\,m}}}{a_{n+1\,m-3}}||v_{n+1\,m-3}^1||^2=
  \frac{\overline{d_{n+2\,m}}}{a_{n+1\,m-3}}\cdot
  \frac{c_{n+1\,m-3}}{\overline{b_{nm}}}||v_{nm}^1||^2.
\end{equation}
It remains to show that
\begin{equation}\nonumber
  \frac{c_{n+2\,m}}{\overline{b_{n+1\,m+3}}}\cdot
  \frac{\overline{d_{n+1\,m+3}}}{a_{nm}}=
  \frac{\overline{d_{n+2\,m}}}{a_{n+1\,m-3}}\cdot
  \frac{c_{n+1\,m-3}}{\overline{b_{nm}}}.
\end{equation}
It follows from \eqref{b40} and \eqref{b45}.
The norm of $V_{n\,m+6}$ can be also calculated in two different ways.
Using \eqref{c25} and \eqref{c20}, one obtains
\begin{equation}\nonumber
  ||v_{n\,m+6}^1||^2=
  -\frac{\overline{b_{n\,m+6}}}{c_{n+1\,m+3}}||v_{n+1\,m+3}^1||^2=
  \frac{\overline{b_{n\,m+6}}}{c_{n+1\,m+3}}\cdot
  \frac{\overline{d_{n+1\,m+3}}}{a_{nm}}||v_{nm}^1||^2.
\end{equation}
Similarly,
\begin{equation}\nonumber
  ||v_{n\,m+6}^1||^2=
  -\frac{\overline{d_{n\,m+6}}}{a_{n-1\,m+3}}||v_{n-1\,m+3}^1||^2=
  \frac{\overline{d_{n\,m+6}}}{a_{n-1\,m+3}}\cdot
  \frac{\overline{b_{n-1\,m+3}}}{c_{nm}}||v_{nm}^1||^2.
\end{equation}
It remains to show that
\begin{equation}\nonumber
  \frac{\overline{b_{n\,m+6}}}{c_{n+1\,m+3}}\cdot
  \frac{\overline{d_{n+1\,m+3}}}{a_{nm}}=
  \frac{\overline{d_{n\,m+6}}}{a_{n-1\,m+3}}\cdot
  \frac{\overline{b_{n-1\,m+3}}}{c_{nm}}.
\end{equation}
It follows again from \eqref{b40} and \eqref{b45}. Remaining two cases can
be shown similarly.

It remains to notice that we have shown that
\begin{equation}\nonumber
  \langle C.w,v_{nm}^1\rangle=\langle w,(-C).v_{nm}^1\rangle
\end{equation}
for $C=A_{\beta}$, $B_{\beta}$, $A_{\alpha+\beta}$ and $B_{\alpha+\beta}$ and
$w=v_{n\pm1,m\pm3}^1$ if \eqref{c30} and \eqref{c35} are satisfied.
By Lemma \ref{lemort}, it is enough since inner product is equal to 0
in all other cases. Hence operators $A_{\beta}$ ,$B_{\beta}$,
$A_{\alpha+\beta}$ and $B_{\alpha+\beta}$ are unitary. We have proved

\begin{theorem}\label{tmun}
	$(\g,K)$ module $V$ is unitary if and only if \eqref{c30} and \eqref{c35}
	are satisfied.
\end{theorem}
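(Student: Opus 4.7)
The plan is to prove the two implications separately, with the bulk of the work already having been done in the discussion preceding the theorem; what remains is to organise it and to verify well-definedness in the sufficiency direction.

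For the necessity direction, I would begin from the observation (Lemma \ref{lemort}) that any unitary structure on $V$ is determined entirely by the positive numbers $\|v_{nm}^1\|^2$, one for each $K$-type, together with the $\su(2)$-compatible rescaling \eqref{a25} within each $V_{nm}$. I would then specialize \eqref{c5} to $A_{\alpha+\beta}$ and test it against pairs of highest-weight vectors: evaluating $\langle A_{\alpha+\beta}.v_{n-1\,m-3}^1, v_{nm}^1\rangle$ using Theorem \ref{tmdek} on one side and the adjoint on the other produces \eqref{c20}, and the analogous computation with $v_{n+1\,m-3}^2$ gives \eqref{c25}. Since $\|v_{nm}^1\|^2$ and $\|v_{n\pm 1\,m\mp 3}^1\|^2$ are strictly positive reals, \eqref{c20} forces $a_{n-1\,m-3}\overline{d_{nm}}$ to be negative real, which is exactly \eqref{c30}; similarly \eqref{c25} yields \eqref{c35}. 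The remaining generators $B_{\alpha+\beta}$, $A_\beta$, $B_\beta$ only reproduce these same constraints, as one checks by a routine expansion.

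For sufficiency, assume \eqref{c30} and \eqref{c35} hold. Pick a base $K$-type $V_{n_0 m_0}$, assign it an arbitrary positive norm $\|v_{n_0 m_0}^1\|^2$, and propagate norms to all neighbouring $K$-types through \eqref{c20} and \eqref{c25}; the hypothesis guarantees each propagated norm is positive. Within each $V_{nm}$ the remaining norms $\|v_{nm}^k\|^2$ are then fixed by \eqref{a23} so that $\kko$ acts unitarily. The main obstacle is well-definedness: the lattice of $K$-types contains loops, since $V_{n+2\,m}$, $V_{n\,m+6}$, $V_{n-2\,m}$, $V_{n\,m-6}$ each admit two distinct two-step paths from $V_{nm}$, and the two computed values of the norm must agree. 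The key point here is that the resulting compatibility identities reduce to the "$X_\beta$ commutes with $X_{\alpha+\beta}$" and "$Y_\beta$ commutes with $Y_{\alpha+\beta}$" relations \eqref{b40} and \eqref{b45} of Theorem \ref{tmrel}; I would verify the four cases (one is already displayed in the lines preceding the theorem) by direct substitution.

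Having produced a positive-definite Hermitian form, the last step is to confirm the full adjoint condition \eqref{c5} for all of $\go$. By Lemma \ref{lemort} the only pairings that do not vanish identically are precisely those on highest-weight vectors of adjacent $K$-types, and these are exactly the pairings used to \emph{define} the norms via \eqref{c20} and \eqref{c25}; this disposes of $A_{\alpha+\beta}$, and the same reasoning applied to $B_{\alpha+\beta}$, $A_\beta$, $B_\beta$ closes that case too. Since the compact directions are unitary by construction of the $\kko$-norms, \eqref{c5} holds on all of $\go$ and $V$ is unitary, finishing the proof.
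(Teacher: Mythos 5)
Your proposal follows essentially the same route as the paper's own argument (the discussion preceding the theorem): necessity by pairing $A_{\alpha+\beta}$ against highest-weight vectors of adjacent $K$ types to extract \eqref{c20} and \eqref{c25}, sufficiency by propagating norms from a base $K$ type and checking loop-consistency via \eqref{b40} and \eqref{b45}, with Lemma \ref{lemort} reducing \eqref{c5} to exactly those pairings. The only slip is cosmetic: \eqref{c20} forces the \emph{unconjugated} product $a_{n-1\,m-3}d_{nm}$ (not $a_{n-1\,m-3}\overline{d_{nm}}$) to be a negative real number, by the same multiplication trick that leads to \eqref{a20} in the $SL(2,\R)$ case.
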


\begin{remark}
	One should compare statements of Theorem \ref{tmun} and \eqref{a20}.
	Each time unitary action is obtained if certain products are real
	negative numbers. We hope that similar statement will be valid for
	some other real reductive groups.
\end{remark}

Now, we want to apply Theorem \ref{tmun} on Theorem \ref{tmgk} and find
all irreducible unitary $(\g,K)$ modules. Firstly, we will analyze modules
$V(c,2t)$ and concentrate on the component which contains $V_{1\,2t}$.
In the second step we will analyze modules $W(r,s)$ for $r>1$.
Theorem \ref{tmun} says that expressions given by \eqref{b75} and \eqref{b80}
have to be negative. Since $\ds \frac{p+1}{p+q+2}>0$ and
$\ds \frac{q+1}{p+q+2}>0$ it reduces to
\begin{equation}\label{c40}
  2c-(p+1)t-p(p+2)<0
\end{equation}
and
\begin{equation}\label{c45}
  2c+(q+1)t-q(q+2)<0.
\end{equation}
We want to find all values of $c$ such that \eqref{c40} and \eqref{c45} are
satisfied.
If $t\geq0$ then it is enough to consider only \eqref{c45}. If $t<0$ then it is
enough to consider only \eqref{c40}. The symmetry shows that it is enough to
consider only one case. Hence, we will consider the case when $t\geq0$.
Since our expression is a quadratic polynomial in variable $q$ we will
consider situations when $t=0$ and $t=1$ separately (when the $x$ coordinate
of the vertex of the parabola is negative). Since $q\in\N$, we will consider
situations $t=2k$, $k\in\N$ and $t=2k+1$, $k\in\N$ separately.

When $t=0$, \eqref{c45} reduces to $-q^2-2q+2c<0$ and it is fulfilled for
$c<0$. Let us define
\begin{equation}\nonumber
  c(0)=0.
\end{equation}
Hence $V(c,0)$ is irreducible unitary for $c\in(-\infty,c(0))$. For $c=0$,
$V(c(0),0)$ is reducible. Let us denote by $U(0)$ irreducible submodule
which contains $V_{10}$. Then $U(0)=V_{10}$ is one-dimensional module.

When $t=1$, \eqref{c45} transforms to $-q^2-q+2c+1<0$ and we set
\begin{equation}\nonumber
  c(1)=-\frac12.
\end{equation}
Hence, $V(c,2)$ is irreducible unitary for $c\in(-\infty,c(1))$ and
$V(c(1),2)$ is reducible. Let $U(2)$ be an irreducible submodule which
contains $V_{12}$. Then $U(2)$ contains $K$ types of the form
\begin{equation}\nonumber
  \{V_{1+p\,2+3p}\,|\,p\in\N\cup\{0\}\}.
\end{equation}

When $t=2k$ for $k\in\N$, \eqref{c45} shows that
\begin{equation}\nonumber
  c(2k)=-\frac{k^2+1}{2}
\end{equation}
and $V(c,4k)$ is irreducible unitary for $c\in(-\infty,c(2k))$. It is also
possible that $V(c,4k)$ has an irreducible unitary submodule which contains
$V_{1\,4k}$. Relation \eqref{c45} transforms to
\begin{equation}\nonumber
  2c+(q+1)t-q(q+2)=-(q-l)(q-(2(k-1)-l))
\end{equation}
for $l\in\{0,\ldots,k-1\}$ and it produces
\begin{equation}\nonumber
  c(l,2k)=\frac{l^2-2(k-1)l-2k}{2}.
\end{equation}
The submodule of $V(c(l,t),2t)$ which contains $V_{1\,4k}$
for $l\in\{0,\ldots,k-1\}$ will be denoted by $U(l,2t)$. It contains
$K$ types of the form
\begin{equation}\label{c50}
  \{V_{1+p+q\,2t+3p-3q}\,|\,p\in\N\cup\{0\},q\in\{0,\ldots,l\}\}.
\end{equation}

When $t=2k+1$ for $k\in\N$, \eqref{c45} shows that
\begin{equation}\nonumber
  c(2k+1)=-\frac{k^2+k+1}{2}
\end{equation}
and $V(c,2(2k+1))$ is irreducible unitary for $c\in(-\infty,c(2k+1))$.
Similarly as in a previous case, it is possible to find irreducible
unitary submodules of $V(c,2(2k+1))$ for some $c$. Again, \eqref{c45}
transforms to
\begin{equation}\nonumber
  2c+(q+1)t-q(q+2)=-(q-l)(q-(2k-1-l))
\end{equation}
for $l\in\{0,\ldots,k-1\}$ and it produces
\begin{equation}\nonumber
  c(l,2k+1)=\frac{l^2-(2k-1)l-2k-1}{2}.
\end{equation}
The submodule of $V(c(l,t),2t)$ which contains $V_{1\,2(2k+1)}$
for $l\in\{0,\ldots,k-1\}$ will be denoted by $U(l,2t)$ and it contains
$K$ types of the form \eqref{c50}.

The situation is very similar for $t<0$. One has to use
\eqref{c40} instead of \eqref{c45}. It is easy to see that
$c(t)=c(-t)$ and $c(l,t)=c(l,-t)$ for
$l\in\{0,\ldots,\left\lfloor\frac{-t}{2}\right\rfloor-1\}$.
The set of $K$ types of $U(-2)$ is
$\{V_{1+q\,-2+3q}\,|\,q\in\N\cup\{0\}\}$
and the set of $K$ types of $U(l,2t)$ is
\begin{equation}\nonumber
\{V_{1+p+q\,2t+3p-3q}\,|\,p\in\{0,\ldots,l\},q\in\N\cup\{0\}\}
\end{equation}
for $l\in\{0,\ldots,\left\lfloor\frac{-t}{2}\right\rfloor-1\}$.

Now, let us consider modules $W(r,s)$ for $r>1$.
Theorem \ref{tmgk} claims that $W(r,s)$ is a submodule of some $V(c,2t)$.
Hence, \eqref{b75} and \eqref{b80} can be applied. Since $r>1$,
expressions given in \eqref{b75} and \eqref{b80} are equal to 0 in the
previous step. Hence, it is enough to find when these two expressions
are equal or less to 0.

We have mentioned that the system given by \eqref{b20} and \eqref{b25}
in variables $a_{rs}d_{r+1\,s+3}$ and $b_{rs}c_{r+1\,s-3}$
($c_{rs}b_{r-1\,s+3}=0$ and $d_{rs}a_{r-1\,s-3}=0$) has a unique solution
\begin{equation}\nonumber
  a_{rs}d_{r+1\,s+3}=-\frac{r(s+r+1)}{2(r+1)}\quad\mbox{and}\quad
  b_{rs}c_{r+1\,s-3}=\frac{r(s-r-1)}{2(r+1)}.
\end{equation}
Since, $r>0$ and $r+1>0$ it remains to consider $s+r+1\geq0$ and
$s-r-1\leq0$. It can not happen that both expressions are equal to 0.
If $s+r+1=0$ then $s-r-1<0$, $W(r,s)$ is reducible and the submodule which
contains $V_{rs}$ will be denoted by $Z(s)$ where $s\in-(\N\setminus\{1\})$.
The set of $K$ types of $Z(s)$ is
\begin{equation}\nonumber
  \{V_{-s-1+q\,s-3q}\,|\,q\in\N\cup\{0\}\}.
\end{equation}
If $s-r-1=0$ then $s+r+1>0$, $W(r,s)$ is reducible and the submodule which
contains $V_{rs}$ will be denoted by $Z(s)$ where $s\in\N\setminus\{1\}$.
The set of $K$ types of $Z(s)$ is
\begin{equation}\nonumber
  \{V_{s-1+p\,s+3p}\,|\,p\in\N\cup\{0\}\}.
\end{equation}
Finally, if $s+r+1>0$ and $s-r-1<0$ then $W(r,s)$ is unitary irreducible
and the set $K$ types is
\begin{equation}\nonumber
  \{V_{r+p+q\,s+p+q}\,|\,p,q\in\N\cup\{0\}\}.
\end{equation}
Hence, we have proved

\begin{theorem}\label{tmunit}
	Irreducible unitary $(\g,K)$ modules are parametrized as follows
	\begin{enumerate}
		\item $V(c,2t)$, $t\in\Z$, $c\in(-\infty,c(t))$,
		\item $U(0)$, $U(2)$, $U(-2)$ and $U(l,2t)$, $t\in\Z\setminus\{0,1,
		  -1\}$, $l\in\{0,\ldots,\left\lfloor\frac{|t|}{2}\right\rfloor-1\}$,
		\item $W(r,s)$, $s+r+1>0$ and $s-r-1<0$,
		\item $Z(s)$, $s\in\Z\setminus\{-1,0,1\}$.
	\end{enumerate}
    All these $(\g,K)$ modules are nonequivalent.
\end{theorem}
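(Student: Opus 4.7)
The plan is to apply Theorem \ref{tmun} systematically to every irreducible $(\g,K)$ module classified in Theorem \ref{tmgk}, using the explicit product formulas \eqref{b75} and \eqref{b80}. By Theorem \ref{tmun}, unitarity is equivalent to the requirement that every nonzero product $a_{nm}d_{n+1\,m+3}$ and every nonzero product $b_{nm}c_{n+1\,m-3}$ lie in $(-\infty,0)$. Since \eqref{b75} and \eqref{b80} carry positive prefactors $\frac{p+1}{p+q+2}$ and $\frac{q+1}{p+q+2}$, this reduces to the pair of polynomial inequalities \eqref{c40} and \eqref{c45} for $p,q \geq 0$ ranging over the $K$-type cone of the module under consideration.

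First I will treat the modules $V(c,2t)$. By the symmetry $t \leftrightarrow -t$ it suffices to handle $t \geq 0$; for $t \geq 0$ inequality \eqref{c40} is automatic once \eqref{c45} holds, so I only need to find the supremum $c(t)$ of $c$ such that $2c + (q+1)t - q(q+2) < 0$ for all $q \in \N \cup \{0\}$. Completing the square (or locating the maximum of the quadratic in $q$) gives the values $c(0)=0$, $c(1)=-\tfrac12$, $c(2k)=-\tfrac{k^2+1}{2}$, $c(2k+1)=-\tfrac{k^2+k+1}{2}$ claimed above. This yields family (1). Next, for the submodules $U(l,2t)$, I observe that at the critical values $c=c(l,t)$ the quadratic $2c+(q+1)t-q(q+2)$ factors as $-(q-l)(q-(2(k-1)-l))$ or $-(q-l)(q-(2k-1-l))$; the vanishing at $q=l$ realises $V(c,2t)$ as reducible and cuts off a submodule supported on the finite strip \eqref{c50}, while the second factor controls whether the remaining products are negative. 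A direct inspection shows that exactly the values $c(l,2k)$ and $c(l,2k+1)$ produce an irreducible unitary submodule $U(l,2t)$, and the exceptional cases $U(0)$, $U(\pm 2)$ corresponding to $t=0,\pm 1$ arise separately because the relevant range of $l$ is empty for those $t$. This is family (2).

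For the modules $W(r,s)$ with $r > 1$, I will use that \eqref{b20}--\eqref{b25} form a nondegenerate linear system (because the $K$-type $V_{r-1\,s\pm 3}$ is absent, forcing $c_{rs}b_{r-1\,s+3}=d_{rs}a_{r-1\,s-3}=0$); solving it gives
\[
a_{rs}d_{r+1\,s+3}=-\tfrac{r(s+r+1)}{2(r+1)}, \qquad b_{rs}c_{r+1\,s-3}=\tfrac{r(s-r-1)}{2(r+1)}.
\]
Propagating via \eqref{b40}, \eqref{b45} (so that the ratio rule $\frac{n}{n+1}$ of \eqref{b60} keeps all subsequent products of the same sign), unitarity holds if and only if $s+r+1>0$ and $s-r-1<0$, which is family (3). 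The boundary cases $s+r+1=0$ and $s-r-1=0$ cause reducibility of $W(r,s)$ and cut off the ``discrete series'' submodules $Z(s)$ of family (4); once again a direct check of the propagated products on the relevant strip shows they are all negative, giving unitarity.

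Finally, the nonequivalence claim is the easy part: the sets of $K$-types of the modules listed in (1)--(4) are pairwise distinct (they are cones, strips, or parallelograms of manifestly different shape and apex), so by Theorem \ref{tmgk} the corresponding $(\g,K)$-modules are nonisomorphic. The main obstacle, and where most of the bookkeeping lives, is the second part of step one: correctly identifying for each parity of $t$ all critical values $c(l,t)$ at which $V(c,2t)$ becomes reducible with an irreducible unitary submodule of finite-strip support, and verifying that the propagated products $ad$, $bc$ on that submodule remain strictly negative; the analyses for $t>0$ and $t<0$ are mirror images under $(p,q) \leftrightarrow (q,p)$, so it suffices to carry out one of them carefully and invoke symmetry for the other.
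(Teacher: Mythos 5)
Your computational core follows the paper's own route essentially verbatim: reduce unitarity to the sign conditions \eqref{c40} and \eqref{c45} via Theorem \ref{tmun} and the product formulas \eqref{b75}--\eqref{b80}, use the $t\leftrightarrow -t$ symmetry to treat only one sign of $t$, locate the thresholds $c(t)$ and the critical values $c(l,t)$ by factoring the quadratic in $q$, and settle $W(r,s)$ and $Z(s)$ from the unique solution of the system given by \eqref{b20} and \eqref{b25}. That part is sound and is exactly what the paper does.

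The one genuine gap is your nonequivalence argument. You assert that the modules in (1)--(4) are pairwise nonisomorphic because ``the sets of $K$-types are pairwise distinct.'' This fails precisely in family (1): for a fixed $t$, every irreducible $V(c,2t)$ with $c\in(-\infty,c(t))$ has the \emph{same} $K$-type support, namely the full cone with apex $V_{1\,2t}$, so the support cannot separate the continuous parameter $c$. The correct separating invariant is the family of products $a_{nm}d_{n+1\,m+3}$ and $b_{nm}c_{n+1\,m-3}$, which (as remarked after Theorem \ref{tmdek}, and as used throughout Theorem \ref{tmrel} and Theorem \ref{tmgk}) are uniquely determined by the module even though the individual coefficients are not; in particular $a_{1\,2t}d_{2\,2t+3}=c-\frac12 t$ recovers $c$, so $V(c,2t)$ and $V(c',2t)$ are nonisomorphic for $c\neq c'$. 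Once this is supplied, your comparison of apexes and shapes does handle the remaining cross-family cases (the apex of $V(c,2t)$ and of the $U$'s has $n=1$, while that of $W(r,s)$ and $Z(s)$ has $n=r>1$, and cones, strips and single rays have visibly different supports), so the fix is local but necessary.
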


One can compare Theorem \ref{tmunit} and results in \cite{hrk72}. Irreducible
unitary representations of $SU(2,1)$ are given on the page 185 by the expression
\begin{equation}\nonumber
    W'=A\cup B_+'\cup B_-'\cup C_+'\cup C_-'\cup D'\cup E_+\cup E_-\cup F.
\end{equation}
Here $A=\{t\in\R\,|\,t>0\}$ (defined on the page 182 for the universal covering
group $\widetilde{SU(2,1)}$ of $SU(2,1)$) stands for irreducible unitary
representations with the spectrum
$\Gamma_0=\{(\frac r2+\frac s2,r-s)\,|\,r,s\in\N\cup\{0\}\}$, defined in the
Proposition 1, case 8 and discussed again on the page 182. The pair
$(\frac r2+\frac s2,r-s)$ denotes the $K$ type of dimension $r+s+1$. We have the
similar notation. The set $A$ corresponds to the set $\{V(c,0)\,|\,c<c(0)=0\}$
which is mentioned in the first part of the Theorem \ref{tmunit}. We can
continue with $B_+'$, but it is clear that the correspondence is not simple.

\section*{Acknowledgement}

This work was supported by the QuantiXLie Centre of Excellence, a project
co financed by the Croatian Government and European Union through the
European Regional Development Fund - the Competitiveness and Cohesion
Operational Programme (Grant KK.01.1.1.01.0004).

\bibliographystyle{alpha} 
\bibliography{b}

\end{document}